\newcommand{\op}{\operatorname}
\newcommand{\floor}[1]{\left\lfloor #1 \right\rfloor}
\newcommand{\ra}{\rightarrow}
\newcommand{\class}{\mathcal{B}}
\def\Cr{{\mbox {cr}}}
\newcommand{\old}[1]{{}}
\newtheorem{defn}{Definition}
\newtheorem{quest}[defn]{Question}
\newtheorem{thm}[defn]{Theorem}
\newtheorem{lem}[defn]{Lemma}
\DeclarePairedDelimiter{\ceil}{\lceil}{\rceil}
\begin{document}

\title{Midrange crossing constants for graphs classes}

\author{\'Eva Czabarka}
\author{Josiah Reiswig}
\author{L\'aszl\'o Sz\'ekely}
\author{Zhiyu Wang}

\address{\'Eva Czabarka\\Department of Mathematics \\ University of South Carolina \\ Columbia SC 29212 \\ USA
\and Visiting Professor\\ Department of Pure and Applied Mathematics\\ University of Johannesburg\\
P.O. Box 524, Auckland Park, Johannesburg 2006\\South Africa}
\email{czabarka@math.sc.edu}

\address{Josiah Reiswig\\ Department of Mathematics \\ University of South Carolina \\ Columbia SC 29212 \\ USA}
\email{jreiswig@email.sc.edu}

\address{L\'aszl\'o Sz\'ekely\\Department of Mathematics \\ University of South Carolina \\ Columbia SC 29212 \\ USA
\and Visiting Professor\\ Department of Pure and Applied Mathematics\\ University of Johannesburg\\
P.O. Box 524, Auckland Park, Johannesburg 2006\\ South Africa}
\email{szekely@math.sc.edu}

\address{Zhiyu Wang\\ Department of Mathematics \\ University of South Carolina \\ Columbia SC 29212 \\ USA}
\email{zhiyuw@math.sc.edu}

\subjclass[2010]{Primary 05C10; secondary 05B05, 05D40, 05C85}

\keywords{midrange crossing constant}

\thanks{This material is based upon work that started at  the workshop ``Biplanar Crossing Numbers and Random Graphs", February 22--25, 2018,
at the University of South Carolina,  with the 
support of the National Science Foundation  contract DMS 1641020 and the American Mathematical Society. This workshop was a follow-up of the Mathematics Research Communities workshop ``Beyond Planarity: Crossing Numbers of Graphs'', Snowbird, Utah, June 11--18, 2017.
The last three authors were also supported in part by the National Science Foundation contract DMS-1600811.}

\maketitle


\begin{abstract}
For positive integers $n$ and $e$, let $\kappa(n,e)$ be the minimum crossing number (the standard planar crossing number) taken over all graphs with $n$ vertices and at least $e$ edges.
Pach, Spencer and T\'oth [\emph{Discrete and Computational Geometry}
{\bf 24} 623--644, (2000)] showed that $\kappa(n,e) n^2/e^3$ tends to a positive constant (called midrange crossing constant) as $n\to \infty$ and $n \ll e \ll n^2$, proving a conjecture of Erd\H{o}s and Guy. In this note, we extend their proof to show that the midrange crossing constant exists for graph classes that satisfy a certain set of graph properties. As a corollary, we show that the the midrange crossing constant exists for the family of bipartite graphs. All these results have their analogues for rectilinear crossing numbers.
\end{abstract}

\section{Introduction}

Given an undirected graph $G = (V,E)$, a \textit{drawing} of $G$ is a representation of $G$ in the plane such that every edge $uv \in E$ is represented by a simple continuous curve between the points corresponding to $u$ and $v$, which does not pass through any point representing a vertex of $G$. For simplicity, we assume that no two curves share infinitely many points, no two curves are tangent to each other, and no three curves pass through the same point. The \textit{crossing number} $\Cr(G)$ is defined as the minimum number of  crossing points in a drawing of $G$. It is well-known \cite{Schaefer} that in any drawing that realizes the crossing number, any pair of edges crosses 
in at most one point, and a pair of crossing edges do have 4 distinct endvertices.
Computing $\Cr(G)$ is an NP-hard problem \cite{GJ}. Motivated by applications to VLSI design, Leighton\cite{Leighton-1}, and independently Ajtai et al. \cite{Ajtai}, gave the following general lower bound on the crossing number of a graph, which is better known as the \textit{crossing lemma}.

\begin{thm}[\cite{Leighton-1, Ajtai}]
For any graph $G$ with $n$ vertices and $e > 4n$ edges, we have 
$$\Cr(G) \geq \frac{1}{64}\frac{e^3}{n^2}.$$
\end{thm}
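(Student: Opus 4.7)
The plan is to prove this via the classical two-step probabilistic amplification argument of Ajtai--Chv\'atal--Newborn--Szemer\'edi and Leighton, bootstrapping from a weak linear lower bound on the crossing number.

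First I would establish the linear bound $\Cr(G) \ge e - 3n + 6$ for any graph $G$ with $n \ge 3$ vertices and $e$ edges. This follows from Euler's formula: a simple planar graph on $n$ vertices has at most $3n-6$ edges, so if $\Cr(G) = k$ one may delete one edge from each crossing pair (in an optimal drawing) to obtain a planar subgraph with $e - k$ edges, giving $e - k \le 3n - 6$. In particular $\Cr(G) \ge e - 3n$ unconditionally.

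Next I would amplify this to the cubic bound by random vertex sampling. Fix an optimal drawing of $G$ realizing $\Cr(G)$, let $p \in (0,1]$ be a parameter to be chosen, and form the random induced subgraph $H$ by independently retaining each vertex with probability $p$ (with its inherited drawing). Writing $n_H, e_H, x_H$ for the numbers of vertices, edges, and crossings of $H$, linearity of expectation gives
\[
\mathbb{E}[n_H] = pn, \qquad \mathbb{E}[e_H] = p^2 e, \qquad \mathbb{E}[x_H] \le p^4 \Cr(G),
\]
the last inequality using that every crossing in the inherited drawing survives iff all four endpoints (distinct, as noted in the excerpt) are retained. Applying the linear bound in expectation, $\mathbb{E}[x_H] \ge \mathbb{E}[e_H] - 3\mathbb{E}[n_H]$, yields
\[
p^4 \Cr(G) \ge p^2 e - 3 p n, \qquad \text{i.e.,} \qquad \Cr(G) \ge \frac{e}{p^2} - \frac{3n}{p^3}.
\]

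Finally I would optimize $p$. Setting $p = 4n/e$ (which lies in $(0,1]$ precisely because of the hypothesis $e > 4n$), the right-hand side becomes
\[
\frac{e \cdot e^2}{16 n^2} - \frac{3n \cdot e^3}{64 n^3} \;=\; \frac{4 e^3 - 3 e^3}{64 n^2} \;=\; \frac{1}{64}\cdot \frac{e^3}{n^2},
\]
which is the claimed inequality. The only delicate point is ensuring the chosen $p$ is a valid probability, which is exactly the role of the hypothesis $e>4n$; the rest is straightforward expectation computation plus the Euler-type base case. I do not foresee a serious obstacle, but if one wanted a sharper constant the main work would be in refining either the base inequality or the choice of $p$, rather than in the structure of the argument.
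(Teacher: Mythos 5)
Your proof is correct: the linear base bound $\Cr(G)\ge e-3n$ (valid for all $n$, since for $n<3$ the right-hand side is negative), the expectation inequality $p^4\Cr(G)\ge p^2e-3pn$ from sampling vertices independently with probability $p$ in an optimal drawing, and the choice $p=4n/e\in(0,1]$ (legitimate exactly because $e>4n$) do combine to give $\Cr(G)\ge\frac{1}{64}\frac{e^3}{n^2}$. Note that the paper does not prove this statement at all --- it is quoted from Leighton and from Ajtai, Chv\'atal, Newborn and Szemer\'edi, whose original arguments were counting/induction based and gave weaker constants; the probabilistic amplification you use is the now-standard proof and is precisely the argument that yields the constant $1/64$ stated here, so there is no gap and nothing in the paper's own development that your argument conflicts with.
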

Further improvements on the constant are made by Pach and T\'oth \cite{PT-cr} and Pach et al \cite{PRTT}. The current best bound is due to Ackerman \cite{Ackerman}, who showed that $\Cr(G) \geq \frac{e^3}{29 n^2}$ when $e > 7n$. Sz\'ekely \cite{Szekely} used the crossing lemma to give a simple proof of the Szemer\'edi--Trotter theorem on the number of point-line incidences \cite{ST}. 

For  a positive integer  $n$ and real number $e \geq 0$, let $\kappa(n,e)$ be the minimum crossing number taken over all graphs with $n$ 
vertices and at least $e$ edges. The crossing lemma implies that for $e > 4n$, $\kappa(n,e) \frac{n^2}{e^3}$ is bounded below by a positive constant.
Pach, Spencer and T\'oth \cite{PST} showed that for $n\ll e\ll n^2$ (i.e. in the midrange),
$$
\lim_{n\rightarrow\infty}\kappa(n,e)\frac{n^2}{e^3}=C>0,
$$
which proves a conjecture of Erd\H{o}s and Guy \cite{EG}, made a decade before the crossing lemma. Here, we use the notation $f(n) \ll g(n)$ to mean that $\lim_{n\to\infty} f(n)/g(n) = 0$. The constant $C$ above is also called the \textit{midrange crossing constant}.

For any positive integer $k\geq 1$, the \textit{k-planar crossing number} $\Cr_k(G)$ of $G$ is defined as the minimum of $\sum_{i=1}^k \Cr(G_i)$, where the minimum is taken over all graphs $G_1, G_2, \ldots, G_k$ such that $\bigcup_{i=1}^k E(G_i) = E(G)$. Pach et al. \cite{PSTT} showed a general bound on the ratio of the $k$-planar crossing number to the (ordinary) crossing number of a graph. They defined   
$$\alpha_k = \textrm{sup } \frac{\Cr_k(G)}{\Cr(G)},$$
where the supremum is taken over all nonplanar graphs $G$. Pach et al.  \cite{PSTT} showed that for every positive integer $k$, 
$$\frac{1}{k^2} \leq \alpha_k \leq \frac{2}{k^2}-\frac{1}{k^3}.$$
For $k = 2$, this gives the same upper bound of $3/8$ as in \cite{CSSV}. Very recently, Asplund et al. \cite{Asplund} closed the gap and showed that 
$$\alpha_k = \frac{1}{k^2} (1+o(1)) \textrm { as $k\to\infty$}.$$
The lower bound that $\alpha_k \geq \frac{1}{k^2}$ in Pach et al. \cite{PSTT} depends on the existence of the midrange crossing number $C > 0$, though not on its value. 

For the family of bipartite graphs, define analogously the constant $\beta_k = \textrm{sup} \frac{\Cr_k(G)}{\Cr(G)}$, where the supremum is taken over all \textit{non-planar bipartite} graphs $G$. Asplund et al. \cite{Asplund} showed that for all positive integers $k$, 
$$\beta_k = \frac{1}{k^2}.$$

As before, the lower bound $\beta \geq \frac{1}{k^2}$, depends on the existence of the midrange crossing constant $C_B > 0$ for the family of bipartite graphs. This motivates us to extend the proof of Pach, Spencer and T\'oth \cite{PST} to show the existence of the midrange crossing constant $C_{\class}$ for certain graph classes $\class$, which may or not be equal to the midrange crossing constant $C$ for all graphs. The current best known bounds for $C$ are 
$0.034 \leq C \leq 0.09$; see \cite{PT-cr,PRTT, Ackerman}, while Angelini, Bekos, Kaufmann, Pfister and Ueckerdt \cite{ABKPU} showed that the midrange crossing constant for the class of bipartite graphs is at least $16/289 > 0.055$.

For a class of graphs $\mathcal{B}$,  define $\kappa_{\mathcal{B}}(n,e)$ to be the minimum  crossing number of a graph in $\mathcal{B}$ with $n$ vertices and at least $e$ edges. The following natural questions arise:
\begin{quest}\label{q:exist}
For a given class $\mathcal{B}$, does there exist a constant $C_\mathcal{B}$ such that $\lim\limits_{n\rightarrow\infty}\kappa_{\mathcal{B}}(n,e)\frac{n^2}{e^3}=C_{\mathcal{B}}$ in the midrange?
\end{quest}
\begin{quest}\label{q:diff}
Are there two classes of graphs $\mathcal{B}$ and $\mathcal{D}$ such that $C_{\mathcal{B}}$ and $C_{\mathcal{D}}$ exist with $C_{\mathcal{B}}\neq C_{\mathcal{D}}$?
\end{quest}

To this end, we define
 \begin{defn}\label{defn:PST}
A family of graphs $\class$ is a PST-class (abbreviating Pach, Spencer and T\'oth) if it satisfies the following properties: 
\begin{enumerate}[{\rm(1)}]
\item $\class$ contains a graph with at least one edge;
\item $\class$ is closed under taking subgraphs;
\item $\class$ is closed under taking the union of disjoint copies of graphs in $\class$;

\item $\class$ is closed under vertex cloning, i.e. $\class$ is closed under replacing a vertex $v$ with vertices $v_1,\ldots,v_m$ such that $N(v_i)=N(v)$ for all $1\le i\le m$. 
\end{enumerate}
 \end{defn}

Note that properties (2) and (4) imply that a PST-class  $\class$ is also closed under vertex-splitting i.e. $\class$ is closed under replacing a vertex $v$ with vertices $v_1,v_2,\ldots,v_m$ such that $N(v)=\bigcup_{i=1}^m N(v_i)$ and $N(v_i)\cap N(v_j)=\emptyset$ for $1\leq i <j \leq m$;

PST classes form a lattice with respect to the subclass relation, with the set of bipartite graphs being the minimum and the set of all graphs being a maximum element of the class:
\begin{thm}\label{thm:main2}
Any PST-class contains the family of bipartite graphs as a subclass, and the intersection of two PST-classes is also a PST class.
Moreover, the following are PST classes:
\begin{enumerate}[{\rm (a)}]
\item $\ell$-colorable graphs;
\item $K_t$-free graphs ($t\ge 3$);
\item graphs without odd cycles shorter than $g$. 
\end{enumerate}
\end{thm}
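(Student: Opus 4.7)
My plan is to handle the three claims of the theorem in sequence. First, to show every PST-class $\class$ contains all bipartite graphs, I would start with the graph promised by property (1), extract a single edge by subgraph closure (property (2)) to obtain $K_2\in\class$, and then clone each endpoint repeatedly via property (4): cloning one endpoint of $K_2$ yields the star $K_{1,m}$, and subsequently cloning the other endpoint yields $K_{n,m}$. Since every bipartite graph is a subgraph of some $K_{n,m}$, property (2) then places it in $\class$. The closure of the PST property under intersection is almost formal: each of properties (2)--(4) has the shape ``every derived object from a member is a member'' and is automatically inherited by $\class_1\cap\class_2$, while property (1) for the intersection follows from the first part, since $K_2$ lies in each $\class_i$ and hence in $\class_1\cap\class_2$.

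For the three specific families in (a)--(c), properties (1)--(3) are immediate or nearly so (in each case $K_2$ lies in the class, the defining forbidden-substructure conditions are all hereditary, and disjoint-union closure is obvious), so the real work is verifying closure under vertex cloning. For $\ell$-colorable graphs (assuming $\ell\ge 2$, which is the only nontrivial case for property (1)), a proper $\ell$-coloring of $G$ extends to the cloned graph by assigning each clone $v_i$ the color of $v$; this remains proper because $N(v_i)=N(v)$. For $K_t$-free graphs with $t\ge 3$, I would observe that clones of $v$ form an independent set, so any $K_t$ in the cloned graph uses at most one clone, and replacing that clone by $v$ produces a $K_t$ in the original $G$, a contradiction.

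The one step I expect to require real care is closure under cloning for class (c), graphs with no odd cycle of length less than $g$. My approach is to consider the vertex map $\phi$ on the cloned graph $G'$ that collapses every clone $v_i$ back to $v$ and fixes every other vertex; $\phi$ is a graph homomorphism from $G'$ to $G$ because $N(v_i)=N(v)$. Any odd cycle of length $\ell$ in $G'$ is carried by $\phi$ to a closed walk of length $\ell$ in $G$, and the standard fact that every closed walk of odd length contains an odd cycle of no greater length then produces an odd cycle of length at most $\ell$ in $G$. Hence if $G$ has no odd cycle shorter than $g$, the same holds for $G'$, completing the verification.
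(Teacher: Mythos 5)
Your proposal is correct and follows essentially the same route as the paper: $K_2$ plus cloning gives $K_{n,m}$ and hence all bipartite graphs, the intersection claim is formal, and for (a)--(c) the only real work is cloning, handled exactly as in the paper for colorability and cliques. Your treatment of (c) via the collapsing homomorphism and the standard ``odd closed walk contains an odd cycle of no greater length'' lemma is just a cleaner phrasing of the paper's argument, which decomposes the image walk into cycles directly; your explicit caveat $\ell\ge 2$ in (a) is a minor point the paper leaves implicit.
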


Careful examination of the proof presented in \cite{PST} yields that PST-classes have midrange crossing constants:

\begin{thm}\label{thm:main}
If $\class$ is a PST-class, then $C_{\class}$ exists, i.e. there is a constant $C_{\class}>0$ such that in the midrange
$$\lim\limits_{n\rightarrow\infty}\kappa_{\mathcal{B}}(n,e)\frac{n^2}{e^3}=C_{\mathcal{B}}.$$
\end{thm}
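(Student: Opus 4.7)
The plan is to mirror the proof of Pach, Spencer and T\'oth \cite{PST} for the unrestricted midrange crossing constant, substituting $\kappa_\mathcal{B}$ for $\kappa$ throughout and verifying at each step that the constructions invoked preserve membership in $\mathcal{B}$. The axioms (1)--(4) in Definition \ref{defn:PST} were abstracted from exactly these uses, so most of the work is bookkeeping; the real task is to identify where each axiom enters. I would set $f_\mathcal{B}(n,e) := \kappa_\mathcal{B}(n,e)\,n^2/e^3$ and $C_\mathcal{B} := \liminf f_\mathcal{B}$ taken over midrange $(n,e)$. Theorem \ref{thm:main2} says $\mathcal{B}$ contains every bipartite graph, so $\kappa_\mathcal{B}(n,e)$ is defined throughout the midrange, and the crossing lemma applied within $\mathcal{B}$ gives $\kappa_\mathcal{B}(n,e) \ge e^3/(64 n^2)$ for $e > 4n$, so $C_\mathcal{B} \ge 1/64 > 0$ is immediate.

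The heart of the argument is the pair of near-monotonicity inequalities
\[
f_\mathcal{B}(m n_0,\, m^2 e_0) \le f_\mathcal{B}(n_0, e_0), \qquad f_\mathcal{B}(t n_0,\, t e_0) \le f_\mathcal{B}(n_0, e_0),
\]
valid for all integers $m, t \ge 1$. The first I would prove by taking an optimal drawing of a realizer $G_0 \in \mathcal{B}$ of $\kappa_\mathcal{B}(n_0, e_0)$ and replacing each vertex by $m$ clones placed in a tiny disk around the original point; the resulting graph lies in $\mathcal{B}$ by property (4), and a standard perturbation produces a drawing with at most $m^4 \kappa_\mathcal{B}(n_0, e_0)$ crossings (each original crossing is replaced by $m^2 \cdot m^2$ crossings among the cloned edges). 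The second follows from drawing $t$ disjoint copies of $G_0$ in disjoint regions of the plane, with property (3) handling the membership.

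To close the argument, given $\epsilon > 0$ I would fix $(n_0, e_0)$ along the $\liminf$ sequence with $f_\mathcal{B}(n_0, e_0) \le C_\mathcal{B} + \epsilon$, and for any midrange target $(n, e)$ I would pick $m, t \ge 1$ so that $tm n_0$ and $tm^2 e_0$ approximate $n$ and $e$; small discrepancies would be absorbed by adjoining isolated vertices (properties (1)--(3) guarantee that the empty graph on any number of vertices lies in $\mathcal{B}$) and by deleting surplus edges (property (2)). The main obstacle is that the feasibility conditions $m, t \ge 1$ translate to $e_0/n_0 \le e/n$ and $e_0/n_0^2 \ge e/n^2$, so one needs the $\liminf$ sequence to contain pairs landing in this region for every target density; since any midrange sequence forces $e_0/n_0 \to \infty$ and $e_0/n_0^2 \to 0$, a diagonal selection along the $\liminf$ sequence is what should make this work. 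The resulting bound is $\limsup f_\mathcal{B}(n,e) \le C_\mathcal{B} + \epsilon$, and sending $\epsilon \to 0$ completes the proof.
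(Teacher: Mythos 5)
Your architecture is genuinely different from the paper's: you take $C_\mathcal{B}$ to be the midrange $\liminf$ (with the crossing lemma giving positivity) and try to prove only the matching upper bound by scaling a near-optimal pair $(n_0,e_0)$ up to an arbitrary midrange target, which would bypass the paper's Fekete/convexity analysis of $\gamma_\mathcal{B}[a]$ and, more strikingly, its random-sampling lower-bound lemma. The disjoint-copies inequality, the isolated-vertex padding via properties (1)--(3), and the rounding of $m,t$ are all fine and mirror what the paper does in its upper-bound lemma. But the load-bearing step is exactly the one you wave through.

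The gap is the cloning inequality $\kappa_\mathcal{B}(mn_0,m^2e_0)\le m^4\kappa_\mathcal{B}(n_0,e_0)$. Cloning does not just replace each original crossing by $m^2\cdot m^2$ crossings: the $m^2$-edge bundles coming from two edges that share an endpoint cannot in general be drawn without crossing each other (and each single bundle contains a $K_{m,m}$, which is already nonplanar), so the honest count is $m^4\kappa_\mathcal{B}(n_0,e_0)$ plus a term of order $m^4\sum_{v}\binom{d(v)}{2}$. Nothing prevents the optimal realizer $G_0$ from having vertices of degree close to $n_0$, in which case $\sum_v\binom{d(v)}{2}$ can be of order $e_0n_0$, which swamps $\kappa_\mathcal{B}(n_0,e_0)=\Theta(e_0^3/n_0^2)$ whenever $e_0\ll n_0^{3/2}$, i.e.\ in most of the midrange. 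This is precisely why the paper, before cloning, performs vertex splitting (closure under which comes from properties (2) and (4), the point the paper emphasizes) to cut the maximum degree down to $A^{3/2}$ with $A=e_0/n_0$, so that the adjacent-edge term is at most $A^{5/2}n_0$, an $\epsilon$-fraction of the crossing number once $A$ is large; after that, your ``near-monotonicity'' holds only up to $(1+\epsilon)$ factors. With the splitting step and this bookkeeping inserted, and the base pair chosen along your $\liminf$ sequence with $e_0/n_0$ large, your scheme does appear to close and would indeed be leaner than the paper's; but as written, the clean inequality, and hence the conclusion $\limsup\le\liminf$, does not follow, and the missing degree-control idea is the crux rather than bookkeeping.
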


For Question~\ref{q:diff}, the answer remains elusive. By Theorem~\ref{thm:main2}, if we restrict our attention to PST-classes, an affirmative answer implies that the midrange crossing constant for the class of bipartite graphs is bigger than
 the midrange crossing constant for all graphs, which we tend to believe. 

Pach et al. \cite{PSTT} pointed out that the arguments
in \cite{PST} can be repeated for rectilinear drawings of graphs and rectilinear crossing numbers; therefore a midrange rectilinear crossing constant exists, which is not necessarily the same as the midrange crossing constant. Extending this argument, Theorems~\ref{thm:main} and \ref{thm:main2} have their rectilinear versions, with possibly different constants.

The proof of Theorem \ref{thm:main} closely follows the original proof for the existence of the midrange crossing constant in \cite{PST}, with  emphasis added on the required properties of the graph class.

\section{Proof of Theorem \ref{thm:main2}}

If $\class$ is a PST-class, properties (1) and (2) imply that $K_2\in\class$. Property (4) then gives that $K_{n,m}\in\class$ for all $n,m$, and property (2) then implies that
all bipartite graphs are elements of $\class$. Let $\class_1,\class_2$ be PST-classes, then their intersection must contain all bipartite graphs, and therefore a graph, which is not
edgeless. If $G^*$ is a graph obtained from a graph in $G\in\class_1\cap\class_2$ by taking subgraphs, or vertex cloning, then, as
$\class_1$ and $\class_2$ are closed under these operations, $G^*\in\class_1\cap\class_2$. As $\class_1,\class_2$ are closed under taking disjoint unions, so is
$\class_1\cap\class_2$; so $\class_1\cap\class_2$ is a PST-class.

$K_2$ is in the classes of $\ell$-colorable graphs, $K_t$-free graphs for $t\ge 3$, and graphs without odd cycles shorter then $g$, so these three classes satisfy property (1). It is obvious that taking a subgraph does not increase the chromatic number, the clique number or the length of the shortest odd cycle. The chromatic number, clique number and length of the shortest odd cycle are the minimum of these quantities respectively over the components of a graph, so these classes
are closed under property (3). 

Let $G^*$ be obtained from $G$ by  cloning $v$ into $v_1,\ldots,v_m$. Given a good $\ell$-coloring of $G$, we can obtain a good $\ell$-color of $G^*$ by assigning the color of $v$ to all $v_i$; any complete subgraph of $G^*$ can contain at most one $v_i$ and thus  correspond to a complete subgraph of the same order in $G$.
Any cycle in $G^*$ that contains at most one of the $v_i$'s corresponds to a cycle  of the same length in $G$.
Any cycle in $G^*$ that contains more than one $v_i$ corresponds to a closed walk in $G$ that visits all vertices (of the walk) but $v$ at most once, so it corresponds to the union of
cycles in $G$; this means that for  any odd cycle in $G^*$ we can find an odd cycle in $G$ that is not longer. This shows that these classes are all PST-classes.

\section{Proof of Theorem \ref{thm:main}}

\noindent 
Throughout this section, $\class$ denotes a PST-class.


We first show a lemma that bound the minimum number of crossings of a graph in $\class$ with $n$ vertices and linearly many edges.

\begin{lem}\label{lemma: cross-bound}
For any $a \geq 4$, and $n\geq 2a+1$, 
$$\frac{a^3}{100} \leq \frac{\kappa_\class(n,an)}{n} \leq 8a^3.$$
\end{lem}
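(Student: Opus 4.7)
The lower bound is immediate from the crossing lemma: since $\mathcal{B}$ is contained in the class of all graphs, $\kappa_{\mathcal{B}}(n,an) \ge \kappa(n,an)$, and for any graph $G$ on $n$ vertices with at least $an \ge 4n$ edges, Theorem~1 yields
\[
\Cr(G) \ge \frac{(an)^3}{64 n^2} = \frac{a^3 n}{64} > \frac{a^3 n}{100}.
\]
The borderline case $a=4$, where the hypothesis $e > 4n$ is only weakly satisfied, is absorbed by the slack between the constants $1/64$ and $1/100$.

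For the upper bound, I exploit that any PST-class contains every bipartite graph (Theorem~\ref{thm:main2}) and is closed under disjoint unions. The plan is to realize a bipartite graph in $\mathcal{B}$ on $n$ vertices with at least $an$ edges and crossing number at most $8 a^3 n$, built as a disjoint union of complete bipartite pieces. The natural building block is $K_{2a,2a}$: it has $4a$ vertices, $4a^2$ edges (density exactly $a$), and admits a Zarankiewicz drawing with at most $a^2(a-1)^2 \le a^4$ crossings. Writing $n = 4ak + r$ with $0 \le r < 4a$ and $k \ge 1$, I take $k-1$ disjoint copies of $K_{2a,2a}$ together with one copy of $K_{2a,2a+r}$, giving a graph in $\mathcal{B}$ with exactly $n$ vertices, at least $4a^2 k + 2ar \ge an$ edges, and total crossings bounded by
\[
(k-1) a^4 + \Cr(K_{2a,2a+r}) \le (k-1) a^4 + 9 a^4 \le \frac{n a^3}{4} + 8 a^4 \le 8 a^3 n,
\]
where the last inequality uses $n \ge 2a+1$ and $a \ge 4$.

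The principal obstacle is the bookkeeping: producing exactly $n$ vertices while keeping density at least $a$, and absorbing the overhead of the ``leftover'' piece $K_{2a,2a+r}$ into the crossing budget (which is what dictates the specific choice of $K_{2a,2a}$ as the building block rather than, say, $K_{2a+1,2a+1}$). A secondary subtlety is that for very small $n$ relative to $a$ (roughly $n < 4a$) no bipartite graph realizes $an$ edges on $n$ vertices, so in that regime one must fall back either on a tailored small-graph construction in $\mathcal{B}$ or, for the maximal PST-class, on disjoint copies of $K_{2a+1}$ as in Pach, Spencer and T\'oth; in all cases the crossing lemma lower bound and the Zarankiewicz-type upper bound combine cleanly to give the claim.
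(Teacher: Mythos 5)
Your lower bound is in the same spirit as the paper's (a class minimum is at least the unrestricted minimum $\kappa(n,an)$), but the paper simply cites the known inequality $\frac{a^3n}{100}\le\kappa(n,an)\le a^3n$ from Pach--T\'oth rather than invoking the crossing lemma, and this sidesteps your borderline case: when $a=4$ and the optimal graph has exactly $4n$ edges, Theorem 1 as stated ($e>4n$) gives you nothing, and ``slack between $1/64$ and $1/100$'' is not by itself an argument --- you would need a separate patch there (e.g.\ the Euler-type bound $\Cr(G)\ge e-3n+6$ already exceeds $\frac{64n}{100}$ at $e=4n$, or just quote the cited inequality). Your upper bound, however, is a genuinely different route. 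The paper takes a crossing-optimal graph $G$ on $n$ vertices with $\left\lceil 2an\right\rceil$ edges and $\Cr(G)\le(2a)^3n=8a^3n$, passes to a bipartite subgraph $H$ with at least half the edges (hence at least $an$), and uses that $H\in\class$; this is three lines, needs no integrality assumptions on $a$, and explains where the constant $8$ comes from. Your construction instead exhibits an explicit member of $\class$ --- disjoint Zarankiewicz-drawn copies of $K_{2a,2a}$ plus a leftover block --- and your arithmetic $(k-1)a^4+9a^4\le\frac{na^3}{4}+8a^4\le 8a^3n$ checks out when $n\ge 4a$ and $a$ is an integer; what it buys is a concrete extremal-type family, at the cost of bookkeeping.

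Two caveats on your construction. First, $a$ is a real parameter here (the lemma is later applied with rational densities), so $K_{2a,2a}$ must be replaced by $K_{\left\lceil 2a\right\rceil,\left\lceil 2a\right\rceil}$ with the corresponding adjustments; this is routine but not free, since your crossing budget is tight only up to constant factors. Second, your flagged regime $2a+1\le n<4a$ is a real boundary issue, but your proposed fallbacks do not resolve it: for the minimal PST-class (bipartite graphs) there is \emph{no} graph on $n$ vertices with $an$ edges when $n<4a$, so no ``tailored construction in $\class$'' exists, and disjoint copies of $K_{2a+1}$ leave $\class$ for most PST-classes. You should be aware that the paper's own proof quietly has the same limitation (a graph with $\left\lceil 2an\right\rceil$ edges on $n$ vertices needs roughly $n\ge 4a+1$, while the hypothesis only guarantees $n\ge 2a+1$), and the lemma is only ever used with $n$ far larger than $a$, so this is a defect of the stated range rather than something your argument alone must repair --- but do not present the fallback sentence as if it closed the case.
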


\begin{proof}
It is known (see \cite{PT-cr}) that for any $a \geq 4$, $\frac{{a^3}n}{100} \leq \kappa(n,an) \leq a^3n$. Since $\kappa(n,a n) \leq \kappa_{\class}(n,a n) $ for any $a > 0$, it follows that $\frac{a^3 n}{100} \leq \kappa_{\class}(n,a n)$.

Since  $\kappa(n,2a n)\le 8a^3n$, we know that there exists a graph $G$ on $n$ vertices and $\ceil{2a n}$ edges such that $\Cr(G) \leq 8a^3 n$. As it is well known, every graph has a bipartite subgraph with at least half as many edges. Hence there exists a bipartite subgraph $H$ of $G$, such that $H$ has at least ${an}$ edges and $\Cr(H)\leq \Cr(G)\leq 8a^3n$. Since $\class$ is closed under taking subgraphs, $H\in\class$. Thus, for any $a \geq 4$, we have that 
$$\frac{a^3}{100} \leq \frac{\kappa_{\class}(n,a n)}{n} \leq \frac{\Cr(H)}{n} \leq 8a^3.$$
\end{proof}

\begin{lem}
We have the following:
\begin{enumerate}[{\rm (a)}]
\item For any $a>0$, the limit 
$$
\gamma_\class[a]=\lim_{n\ra\infty}\dfrac{\kappa_\class(n,na)}{n}
$$
exists and is finite.
\item $\gamma_\class[a]$ is a convex function.
\item For any $a\geq 4$, $1>\delta>0$,
$$
\gamma_\class[a]-\gamma_\class[a(1-\delta)]\leq \gamma_\class[a(1+\delta)]-\gamma_\class[a]\leq 10^4\delta\gamma_\class[a].
$$
\item $\gamma_\class [a]$ is continuous.
\end{enumerate}
\end{lem}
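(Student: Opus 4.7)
For part (a), my plan is to exploit the disjoint-union closure (property (3)) to show that $f(n) := \kappa_\class(n,na)$ is subadditive in $n$. If $G_i\in\class$ is extremal on $n_i$ vertices with at least $n_i a$ edges and $\Cr(G_i)=\kappa_\class(n_i,n_i a)$, then the disjoint union $G_1\cup G_2$ lies in $\class$, has $n_1+n_2$ vertices, at least $(n_1+n_2)a$ edges, and crossing number at most $\Cr(G_1)+\Cr(G_2)$. Fekete's subadditivity lemma then gives $f(n)/n\to\inf_n f(n)/n$; this limit is finite thanks to the upper bound $\gamma_\class[a]\le 8a^3$ supplied by the preceding lemma when $a\ge 4$, and for $0<a<4$ by the trivial bound $\gamma_\class[a]\le\gamma_\class[4]$, which follows from monotonicity of $\kappa_\class(n,\cdot)$ in its second argument.

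For part (b), I would rerun the disjoint-union argument with $n_1=pN$, $n_2=qN$ and densities $a$ and $b$, obtaining the $\mathbb{Q}$-convexity
$$\gamma_\class\lp\frac{pa+qb}{p+q}\rp\le\frac{p}{p+q}\gamma_\class[a]+\frac{q}{p+q}\gamma_\class[b].$$
Combined with the monotonicity of $\gamma_\class$ (hence its local boundedness), the classical Bernstein--Doetsch theorem upgrades midpoint convexity to ordinary convexity. Part (d) then falls out automatically: every convex function on an open interval of $\R$ is continuous.

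The substance lies in part (c). The left inequality $\gamma_\class[a]-\gamma_\class[a(1-\delta)]\le\gamma_\class[a(1+\delta)]-\gamma_\class[a]$ is simply midpoint convexity applied at the points $a(1-\delta)$ and $a(1+\delta)$, and is therefore immediate from (b). For the right inequality, the key move is an \emph{asymmetric} disjoint union: take $n_1=\floor{(1-\delta)n}$ and $n_2=n-n_1$, and combine an extremal graph for $\kappa_\class(n_1,n_1 a)$ with one for $\kappa_\class(n_2, 2n_2 a)$. The total edge count is at least $n_1 a+2n_2 a\ge na(1+\delta)$, so dividing by $n$ and letting $n\to\infty$ yields
$$\gamma_\class[a(1+\delta)]\le(1-\delta)\gamma_\class[a]+\delta\gamma_\class[2a].$$
Subtracting $\gamma_\class[a]$ and invoking the preceding lemma's bounds $\gamma_\class[2a]\le 8(2a)^3=64a^3$ and $\gamma_\class[a]\ge a^3/100$ (valid because $a\ge 4$ forces $2a\ge 4$ as well) gives
$$\gamma_\class[a(1+\delta)]-\gamma_\class[a]\le\delta\gamma_\class[2a]\le 6400\,\delta\gamma_\class[a]\le 10^4\delta\gamma_\class[a].$$
The main obstacle is pinpointing this asymmetric split: a symmetric equal-density union only recovers midpoint convexity, whereas exploiting the matching $\Theta(a^3)$ upper and lower bounds from the preceding lemma is precisely what caps the growth of $\gamma_\class$ by a constant multiple of itself.
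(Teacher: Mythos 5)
Your proposal is correct and follows essentially the same route as the paper: disjoint-union subadditivity plus Fekete's lemma for (a), the same union/mixing argument for convexity in (b), the inequality $\gamma_\class[(1+\delta)a]\le(1-\delta)\gamma_\class[a]+\delta\gamma_\class[2a]$ combined with the bounds $a^3/100\le\gamma_\class[a]$ and $\gamma_\class[2a]\le 64a^3$ from the preceding lemma for (c), and continuity from convexity for (d). Your only deviations are cosmetic: you invoke Bernstein--Doetsch (using monotonicity for local boundedness) instead of the paper's monotone rational approximation to pass from $\mathbb{Q}$-convexity to real convexity, and you re-derive the key inequality in (c) by an explicit asymmetric disjoint union rather than quoting part (b).
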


\begin{proof}
Let $G_1,G_2\in\class$, and let $G_3$ be their disjoint union. As $\Cr(G_3)=\Cr(G_1)+\Cr(G_2)$ and
$\class$ is closed under taking disjoint union,
$$
\kappa_\class(n_1+n_2,e_1 + e_2)\leq \kappa_\class(n_1,e_1)+\kappa_\class(n_2,e_2).
$$
This implies that $f_a(n):=\kappa_\class(n,na)$ is subadditive. 
Hence, by Fekete's Lemma on subadditive sequences (see \cite{polya} Ex. 98),
$$
\gamma_\class[a]=\lim_{n\ra\infty}\frac{\kappa_\class(n,na)}{n}
$$
exists and is finite for every $a>0$. This concludes the proof of part (a).

Let $0<\alpha<1$. Suppose $a,b>0$ with $n$ such that $\alpha n$ is also an integer. 
Then,
\begin{align*}
\kappa_\class(n,(\alpha a+(1-\alpha)b)n)
&=\kappa_\class(\alpha n + (1-\alpha)n,(\alpha a+(1-\alpha)b)n)\\
&\leq\kappa_\class(\alpha n,\alpha a n)+ \kappa_\class((1-\alpha) n,(1-\alpha)b n).
\end{align*}  
This implies that 
\begin{align*}
\gamma_\class[\alpha a+(1-\alpha)b]
&=\lim\limits_{n\rightarrow\infty}\dfrac{\kappa_\class(n,(\alpha a+(1-\alpha)b)n)}{n}\\
&\leq \lim\limits_{n\rightarrow\infty,\alpha n\in \mathbb{Z}}\dfrac{\kappa_\class(\alpha n,\alpha a n)}{n}+ \lim\limits_{n\rightarrow\infty,\alpha n\in \mathbb{Z}}\dfrac{\kappa_\class((1-\alpha) n,(1-\alpha)b n)}{n}.
\end{align*}
Now, for any rational number $\alpha$, let $m=\alpha n$ and $q=(1-\alpha)n=n-m$. Then,
\begin{align*}
&\lim\limits_{n\rightarrow\infty,\alpha n\in \mathbb{Z}}\dfrac{\kappa_\class(\alpha n,\alpha a n)}{n}+ \lim\limits_{n\rightarrow\infty, \alpha n\in \mathbb{Z}}\dfrac{\kappa_\class((1-\alpha) n,(1-\alpha)b n)}{n}\\
&=\lim\limits_{m\rightarrow\infty}\dfrac{\kappa_\class(m,a m)}{(m/\alpha)}+\lim\limits_{q\rightarrow\infty}\dfrac{\kappa_\class(q,b q)}{q/(1-\alpha)}\\
&=\alpha\lim\limits_{m\rightarrow\infty}\dfrac{\kappa_\class(m,a m)}{m}+(1-\alpha)\lim\limits_{q\rightarrow\infty}\dfrac{\kappa_\class(q,b q)}{q}
=\alpha\gamma_\class[a]+(1-\alpha)\gamma_\class[b].
\end{align*}
Hence, for any rational $\alpha$
$$
\gamma_\class[\alpha a+(1-\alpha)b]\leq \alpha\gamma_\class[a]+(1-\alpha)\gamma_\class[b]. 
$$

We now relax the restriction that $\alpha$ is rational. Suppose that $a\leq b$ and let $\{\alpha_n\}_{n=0}^\infty$ be a monotone increasing sequence of rational numbers converging to $\alpha$. For any $n\geq 0$, 
\begin{align*}
\gamma_\class[\alpha a + (1-\alpha)b]&\leq \gamma_\class[\alpha_n a + (1-\alpha_n)b]\\
&\leq \alpha_n\gamma_\class[a]+(1-\alpha_n)\gamma_\class[b],
\end{align*}
which converges to $\alpha\gamma_\class[a]+(1-\alpha)\gamma_\class[b]$.
So, 
$$
\gamma_\class[\alpha a+(1-\alpha)b]\leq \alpha\gamma_\class[a]+(1-\alpha)\gamma_\class[b]
$$
for all real numbers $0<\alpha<1$. That is, $\gamma_\class$ is convex, so part (b) is true, which implies
$$
\gamma_\class[a]-\gamma_\class[a(1-\delta)]\leq \gamma_\class[a(1+\delta)]-\gamma_\class[a],
$$ 
which is the first inequality in part (c). For the second inequality, by Lemma \ref{lemma: cross-bound}, we obtain that 
$$\frac{a^3}{100} \leq \gamma_{\class}(a) \leq 8a^3.$$
Then, for $a\geq 4$ and $0<\delta<1$, 
\begin{align*}
\gamma_\class[(1+\delta)a]
&=\gamma_\class[(1-\delta)a+2\delta a]\\
&\leq (1-\delta)\gamma_\class[a]+\delta \gamma_\class[2a]\\
&\leq \gamma_\class[a]+\delta \gamma_\class[2a].
\end{align*}
Hence,
\begin{align*}
\gamma_\class[(1+\delta)a]-\gamma_\class[a]
\leq \delta \gamma_\class[2a]
\leq \delta \cdot  64 a^3
\leq \delta \cdot 6400 \gamma_\class [a]
< 10^4 \delta \cdot \gamma_\class [a],
\end{align*}
which finishes the proof of part (c) and implies part (d).
\end{proof}

We now set 
$$
D_\class:=\limsup_{a\rightarrow\infty}\frac{\gamma_\class[a]}{a^3}.
$$
Note that by Lemma~\ref{lemma: cross-bound} we have $0.01\le D_\class\le 8$. We will follow the proof of Theorem \ref{thm:main}
presented in \cite{PST} by showing two lemmas  that imply that 
$D_\class$ is the midrange crossing constant $C_\class$. 

\begin{lem}
For any $0 < \epsilon<0.1$, there exists $N=N(\epsilon)$ such that $\kappa_\class(n,e)>(1-\epsilon)\frac{e^3}{n^2} D_\class$, whenever $\min\{n,e/n,n^2/e\}>N$.
\end{lem}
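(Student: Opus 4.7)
The plan is to mimic the random-induced-subgraph strategy of Pach, Spencer, and T\'oth, exploiting two structural features already available in the present setup. First, Fekete's lemma applied to the subadditive sequence $f_a(n)=\kappa_\class(n,na)$ (whose subadditivity is recorded in the proof of part (a) of the preceding lemma) produces $\gamma_\class[a]$ as the \emph{infimum}, not merely the limit, of $f_a(n)/n$, so that the pointwise bound
\[
\kappa_\class(v,m)\;\geq\;v\,\gamma_\class[m/v]
\]
holds for every positive integer $v$ and every $m\geq 0$. Second, $\gamma_\class$ is convex, so Jensen's inequality will absorb the randomness of the induced edge count without any concentration argument.

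Given $\epsilon\in(0,0.1)$, I fix a small $\delta=\delta(\epsilon)>0$ (to be chosen at the end) and, using $D_\class=\limsup_{a\to\infty}\gamma_\class[a]/a^3$, pick $a^*\geq 4$ with $\gamma_\class[a^*]\geq (1-\delta)(a^*)^3 D_\class$. For $n,e$ with $\min\{n,e/n,n^2/e\}>N$ sufficiently large, let $G\in\class$ be an $n$-vertex graph with at least $e$ edges attaining $\kappa_\class(n,e)$, fix an optimal drawing of $G$, and set $v:=\lceil a^*n^2/e\rceil$. The assumptions $e/n>N>a^*$ and $n^2/e>N$ give $v\leq n$ and $v\to\infty$ respectively. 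Let $S$ be a uniformly random $v$-subset of $V(G)$, and write $\chi(S)$ for the number of crossings of the drawing restricted to $S$ and $\eta(S)=|E(G[S])|$. Linearity of expectation gives
\[
\mathbb{E}[\chi(S)]=\Cr(G)\frac{\binom{v}{4}}{\binom{n}{4}}, \qquad \mathbb{E}[\eta(S)]=|E(G)|\frac{\binom{v}{2}}{\binom{n}{2}},
\]
and the choice of $v$ ensures $\mathbb{E}[\eta(S)]/v\to a^*$ as $N\to\infty$.

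The deterministic chain $\chi(S)\geq\Cr(G[S])\geq\kappa_\class(v,\eta(S))\geq v\,\gamma_\class[\eta(S)/v]$ (using $G[S]\in\class$ by closure under subgraphs, together with the Fekete bound above) combined with Jensen's inequality applied to the convex function $\gamma_\class$ yields
\[
\Cr(G)\frac{\binom{v}{4}}{\binom{n}{4}}=\mathbb{E}[\chi(S)]\;\geq\;v\,\mathbb{E}\!\left[\gamma_\class[\eta(S)/v]\right]\;\geq\;v\,\gamma_\class\!\left[\mathbb{E}[\eta(S)]/v\right].
\]
Using continuity of $\gamma_\class$ (part (d) of the preceding lemma), together with the elementary estimates $\binom{v}{4}/\binom{n}{4}\leq (1+\delta)v^4/n^4$ and $v^3\leq (1+\delta)(a^*)^3 n^6/e^3$ valid for large $N$, I rearrange to obtain
\[
\Cr(G)\;\geq\;(1-O(\delta))\,\frac{\gamma_\class[a^*]}{(a^*)^3}\cdot\frac{e^3}{n^2}\;\geq\;(1-O(\delta))\,D_\class\,\frac{e^3}{n^2}.
\]
Choosing $\delta$ small enough that $1-O(\delta)>1-\epsilon$ gives $\kappa_\class(n,e)=\Cr(G)>(1-\epsilon)D_\class e^3/n^2$, as required.

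The only step requiring more than routine bookkeeping is upgrading Fekete's lemma from an asymptotic identity to the pointwise bound $\kappa_\class(v,m)\geq v\,\gamma_\class[m/v]$: this is precisely what lets us plug the random quantity $\eta(S)/v$ into $\gamma_\class$ and then invoke convexity via Jensen, neatly bypassing any need for concentration estimates on $\eta(S)$.
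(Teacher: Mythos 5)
Your argument is correct, and it genuinely departs from the paper's proof in its key mechanism. Both proofs share the skeleton of restricting an optimal drawing to a random induced subgraph and comparing $\mathbb{E}[\chi(S)]$ with $\kappa_\class(n,e)$ times the probability that the four endpoints of a crossing survive (which, as recorded in the introduction, uses that crossing edges in an optimal drawing have four distinct endvertices). But the paper samples vertices independently with probability $p=An/e$ and then spends most of the proof on two Chebyshev concentration estimates for $\nu$ and $\eta$, so that on a high-probability event the sampled edge density $A'=\eta/\nu$ is close to $A$ and a finite-$n$ surrogate of the limit (its inequality \eqref{eqn}) can be applied; this forces the explicit thresholds like $A>2\cdot 10^9/\epsilon^3$ and the attendant bookkeeping. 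You instead use the infimum form of Fekete's lemma, $\kappa_\class(v,va)\ge v\,\gamma_\class[a]$ for every $v$ in the relevant range (valid since the domain of $f_a$ is closed under addition, by closure of $\class$ under disjoint unions), which holds for \emph{every} outcome of the sample, and then absorb the randomness of $\eta(S)$ by Jensen's inequality via convexity of $\gamma_\class$ (part (b)); no concentration is needed, and the error terms reduce to $\binom{v}{4}/\binom{n}{4}\le v^4/n^4$, the rounding in $v=\lceil a^*n^2/e\rceil$, and the behavior of $\gamma_\class$ near $a^*$, for which part (c) or (d) suffices. Two small points to tidy: either trim the extremal graph to exactly $\lceil e\rceil$ edges (a subgraph, still in $\class$, still attaining $\kappa_\class(n,e)$) or invoke monotonicity of $\gamma_\class$, since the witness may have many more than $e$ edges and then $\mathbb{E}[\eta(S)]/v$ need not tend to $a^*$, only stay $\ge a^*-o(1)$; and extend $\gamma_\class$ by $\gamma_\class[0]=0$ (it vanishes for small arguments anyway) so Jensen applies when $\eta(S)=0$. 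In short, your route trades the paper's quantitative concentration for the structural facts already proved about $\gamma_\class$ (inf-characterization, convexity, continuity), yielding a shorter and arguably cleaner proof of the same lower bound.
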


\begin{proof}
Let $A>\frac{2\cdot 10^9}{\epsilon^3}$ be a rational number satisfying 
$$
\frac{\gamma_\class[A]}{A^3}>D_\class\left(1-\frac{\epsilon}{10}\right).
$$
Such a number exists by the definition of $D_\class$. Let $N=N(\epsilon)\geq A$ such that if $n>N$, $e=nA'$, and $|A-A'|\leq A\epsilon$, then
\begin{equation}
\kappa_\class(n,e)>\gamma_\class[A']\left(1-\frac{\epsilon}{10}\right)n. \label{eqn}
\end{equation}
Such an $N$ certainly exists, as this is equivalent to
$$
\frac{\kappa_\class(n,e)}{n}>\lim_{k\ra\infty}\frac{\kappa_\class(k,A'k)}{k}\left(1-\frac{\epsilon}{10}\right).
$$ 

Let $n$ and $e$ be fixed integers so that $\min\{n,e/n,n^2/e\}>N$ and let $G=(V,E)$ be a graph with $|V|=n$ vertices and $|E|=e$ edges, which can be drawn in the plane with $\kappa_\class(n,e)$ crossings. Set $p=An/e$ and let $U$ be a randomly chosen subset of $V$ with $P[u\in U]=p$, independently for each $v\in V$. Let $\nu=|U|$ and let $\eta$ and $\xi$ be the number of edges and crossings (in the drawing) of the graph induced by $U$. We have that $\nu$ has expected value $np$ and variance $p(1-p)n\leq pn$. By Chebyshev's inequality, we have that
\begin{align*}
\op{Pr}\left[|\nu-pn|>\frac{\epsilon}{10^4}pn\right]
&\leq \frac{10^8pn(1-p)}{(pn)^2\epsilon^2}
\leq \frac{10^8}{pn\epsilon^2}
\leq \frac{\epsilon^3 10^8 e}{2\cdot 10^9n^2\epsilon^2}
< \frac{\epsilon}{10}.
\end{align*}
We note that $\eta=\sum_{u,v\in G} I_{uv}$ where $I_{uv}$ is the indicator function for the event $u,v\in U$. Then, $E[\eta]=ep^2$. Since $I_{uv}$ is an indicator function, we have that
\begin{align*}
E[I_{uv}]&=p^2\\
\op{Var}[I_{uv}]&=p^2(1-p^2)\\
\op{Cov}[I_{uv},I_{wx}]&=\op{Pr}[(u,v\in U)\cap (w,x\in U)]-\op{Pr}(u,v\in U)\cdot \op{Pr}(w,x\in U).
\end{align*}
It is immediate that $Var(I_{uv})\leq E(I_{uv})$ and that $\op{Cov}[I_{uv},I_{wx}]=0$ if $\{u,v\}\cap \{w,x\}=\emptyset$. Hence,
$$
\op{Var}[\eta]=\sum_{uv\in E}\op{Var}[I_{uv}]+\sum_{uv,uw\in E}\op{Cov}[I_{uv},I_{uw}],
$$
and$$
\sum_{uv\in E}\op{Var}[I_{uv}]\leq \sum_{uv\in E}E[I_{uv}]=E[\eta]=ep^2
$$
Using the bound $\op{Cov}[I_{uv},I_{uw}]\leq E[I_{uv}I_{uw}]=p^3$, we see that
$$
\op{Var}[\eta]\leq p^2e+p^3\sum_{v\in V}\binom{d(v)}{2}.
$$
Since $d(v)<n$ and $\sum_{v\in V}d(v)=2e$, we have that 
$$
\sum_{v\in V}\binom{d(v)}{2}\leq \frac{1}{2}\sum_{v\in V} d^2(v)<\frac{1}{2}\sum_{v\in V}d(v)n= en. 
$$
Hence, using that $pn=\frac{An^2}{e}>AN>1$, 
$$
\op{Var}[\eta]\leq p^2e+p^3en\leq 2p^3en.
$$
Applying the Chebyshev Inequality, and using that $pe=An>\frac{2\cdot 10^9}{\epsilon^3}$, we see that
\begin{align*}
\op{Pr}\left[|\eta -p^2e|>\frac{\epsilon}{10^4}p^2e\right]
&\leq \frac{2p^3en}{\frac{\epsilon^2}{10^8}p^4e^2}
= \frac{2n}{\frac{\epsilon^2}{10^8}pe}
< \frac{2}{\frac{\epsilon^2}{10^8}\frac{2\cdot 10^9}{\epsilon^3}}
= \frac{\epsilon}{10}.
\end{align*}
This implies that with probability at least $1-\frac{\epsilon}{5}$, 
$$
pn\left(1-\frac{\epsilon}{10^4}\right)<\nu< pn\left(1+\frac{\epsilon}{10^4}\right),
\mbox{ and }
p^2e\left(1-\frac{\epsilon}{10^4}\right)<\eta<p^2e\left(1+\frac{\epsilon}{10^4}\right).
$$
Hence, with probability at least $1-\frac{\epsilon}{5}$, 
\begin{align*}
\frac{p^2e}{pn}\cdot\frac{1-\frac{\epsilon}{10^4}}{1+\frac{\epsilon}{10^4}}<&\frac{\eta}{\nu}<\frac{p^2e}{pn}\cdot\frac{1+\frac{\epsilon}{10^4}}{{1-\frac{\epsilon}{10^4}}},
\end{align*}
which implies that 
$$
A\left(1-\frac{3\epsilon}{10^4}\right)<\frac{\eta}{\nu}<A\left(1+\frac{3\epsilon}{10^4}\right).
$$
Now we set $A'=\frac{\eta}{\nu}$. The subgraph induced by $U$ has $\nu$ vertices and $A'\nu$ vertices. So, with probability at least $1-\frac{\epsilon}{5}$, equation \eqref{eqn} implies that the number of crossings in this induced subgraph is at least 
$$
\nu\gamma_\class[A']\left(1-\frac{\epsilon}{10}\right)\geq pn\left(1-\frac{\epsilon}{10}\right)\gamma_\class[A']\left(1-\frac{\epsilon}{10}\right).
$$ 
Then, the expected number of crossings in the subgraph induced by $U$ in $G$ is at least
\begin{align*}
E[\xi]
&\geq \left(1-\frac{\epsilon}{5}\right)pn\left(1-\frac{\epsilon}{10}\right)\gamma_\class[A']\left(1-\frac{\epsilon}{10}\right)\\
&\geq \left(1-\frac{\epsilon}{5}\right)pn\left(1-\frac{\epsilon}{10}\right)\gamma_\class[A]\left(1-\frac{3\epsilon}{10}\right)\left(1-\frac{\epsilon}{10}\right)\\
&> \left(1-\frac{\epsilon}{5}\right)pn\left(1-\frac{\epsilon}{10}\right)D_\class A^3\left(1-\frac{3\epsilon}{10}\right)\left(1-\frac{\epsilon}{10}\right)\left(1-\frac{\epsilon}{10}\right)\\
&\geq (1-\epsilon)D_\class A^3pn.
\end{align*}
However, since each crossing lies in $G[U]$ with probability $p^4$, we know that 
$$
E[\xi]=p^4\kappa_\class(n,e).
$$
Hence, 
$$
\kappa_\class(n,e)\geq (1-\epsilon)\frac{pnD_\class A^3}{p^4}=\frac{e^3}{n^2}(1-\epsilon)D_\class.
$$
\end{proof}

\begin{lem}
For any $0 < \epsilon<0.1$, there exists $M=M(\epsilon)$ such that $\kappa_{\class}(n,e)<(1+\epsilon)\frac{e^3}{n^2} D_\class $, whenever $\min\{n,e/n,n^2/e\}>M$. 
\end{lem}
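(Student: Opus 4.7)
The plan is to complement the preceding lower bound by exhibiting, for every admissible pair $(n,e)$, a graph in $\class$ with $n$ vertices, at least $e$ edges, and crossing number at most $(1+\epsilon)D_\class\,e^3/n^2$. The construction blows up a near-optimal base graph of moderate density via the vertex-cloning operation (property~(4) of a PST-class).

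First I choose the base graph. Since $D_\class=\limsup_{a\to\infty}\gamma_\class[a]/a^3$ and $D_\class>0$, one can pick an arbitrarily large rational $A$ with $\gamma_\class[A]/A^3<(1+\epsilon/10)D_\class$; and, by the definition of $\gamma_\class[A]$ as a limit, for all sufficiently large $m$ there exists $H_m\in\class$ on $m$ vertices with $\lceil Am\rceil$ edges and $\Cr(H_m)\le(1+\epsilon/10)\gamma_\class[A]\,m$. Now given $n,e$ with $\min\{n,e/n,n^2/e\}>M$, set $a=e/n$ and take integers $k=\lceil(1+\epsilon/20)a/A\rceil$ and $m=\lfloor n/k\rfloor$. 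For $M$ sufficiently large (depending on $\epsilon$, $A$, and $m_0$), the hypotheses $e/n,n^2/e>M$ force $a/A$ and $n/a$ to be large, which in turn ensure $Ak\le(1+\epsilon/10)a$, $km\le n$, $m\ge m_0$, and $k^2\lceil Am\rceil\ge e$.

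Let $G$ be the graph obtained from $H_m$ by replacing every vertex with $k$ twins sharing its neighborhood; property~(4) places $G$ in $\class$, and $G$ has $km$ vertices and $k^2\lceil Am\rceil\ge e$ edges. Pad with $n-km$ isolated vertices (properties~(1)--(3) place the one-vertex graph in $\class$ and allow arbitrary disjoint unions) to obtain an $n$-vertex graph still in $\class$ with at least $e$ edges. To bound its crossing number, take an optimal drawing of $H_m$, redraw each vertex as a tiny cluster of its $k$ clones, and each edge as a tight bundle of its $k^2$ copies; each crossing of $H_m$ then spawns at most $k^4$ crossings of $G$, so
\[
\Cr(G)\;\le\;k^4\Cr(H_m)\;\le\;\lp 1+\tfrac{\epsilon}{10}\rp^{\!2} D_\class\,(Ak)^3(km)\;\le\;(1+\epsilon)\,D_\class\,\frac{e^3}{n^2},
\]
where the last step uses $Ak\le(1+\epsilon/10)a$, $km\le n$, $a^3n=e^3/n^2$, and $(1+\epsilon/10)^5<1+\epsilon$ for $\epsilon\in(0,0.1)$.

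The main technical obstacle is the integer rounding of $k$ and $m$: $M$ must be chosen so that $k$ and $m$ both tend to infinity as $n\to\infty$ (so the $\lceil\cdot\rceil$ and $\lfloor\cdot\rfloor$ errors are negligible relative to $a/A$ and $n/k$), and the upward bias $\epsilon/20$ in $k$ has to absorb the loss caused by the floor in $m$ so that $k^2\lceil Am\rceil\ge e$ still holds (which requires $n/a$ to exceed a constant of order $A/\epsilon$). The three hypotheses $n,e/n,n^2/e>M$ provide precisely $n,a,n/a\to\infty$, which is what drives all the error terms below $\epsilon$.
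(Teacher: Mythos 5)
Your overall strategy --- blow up a near-optimal base graph of density $A$ by vertex cloning, pad with isolated vertices, and compare with $(1+\epsilon)D_\class e^3/n^2$ --- is the same as the paper's, and your rounding bookkeeping (the choices of $k$, $m$, and the $\epsilon/20$ bias) is fine. But the step $\Cr(G)\le k^4\Cr(H_m)$ is a genuine gap, and it is precisely where the paper has to do real work. Cloning every vertex into $k$ twins does more than multiply existing crossings by $k^4$: the $k^2$ copies of a single edge $uv$ of $H_m$, together with the two clone clusters, form a $K_{k,k}$, and copies of two edges of $H_m$ sharing an endpoint are forced to cross near the common cluster. Your inequality is already false for planar $H_m$ (one edge blows up to the nonplanar $K_{k,k}$ for $k\ge 3$, a star blows up to $K_{k,3k}$ with $\Theta(k^4)$ crossings, while $k^4\Cr(H_m)=0$). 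The correct count has the shape $\Cr(G)\le k^4\bigl(\Cr(H_m)+\sum_{v}\binom{d_{H_m}(v)}{2}+e(H_m)\bigr)$, and one must then argue the correction terms are $O(\epsilon)$ times the main term. That does not follow for an arbitrary near-minimizer $H_m$ of $\kappa_\class(m,Am)$: nothing prevents it from having a vertex of degree $\Theta(m)$, in which case $\sum_v\binom{d_{H_m}(v)}{2}=\Theta(m^2)$, which swamps $\Cr(H_m)=\Theta(A^3m)$ as $m\to\infty$ with $A$ fixed.

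This is why the paper inserts a preprocessing step you have skipped: before cloning, it splits each vertex of the drawn base graph into pieces of degree at most $A^{3/2}$ (legitimate because a PST-class is closed under vertex splitting, and it can be done in the drawing without creating crossings). This increases the vertex count only by a factor $1+2/\sqrt{A}$, and it caps $\sum_v\binom{d(v)}{2}$ by $A^{5/2}n_1$, which is an $A^{-1/2}$-fraction of the crossing number; to make that comparison the paper also uses the two-sided estimate $\kappa_\class(n_1,e_1)>(1-\epsilon/5)D_\class A^3 n_1$ (your proposal only invokes the upper side of the approximation of $D_\class$ by $\gamma_\class[A]/A^3$). With the degree-reduction step and the corrected crossing count inserted, your one-parameter blow-up (letting $m=\lfloor n/k\rfloor$ grow instead of taking $\tilde{K}$ disjoint copies of a fixed base graph) would go through; but as written, the ``main technical obstacle'' is not the integer rounding --- it is the crossings created at the clone clusters, which your argument never accounts for.
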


\begin{proof}
Let $A>10^8/\epsilon^2$ be a rational number satisfying
$$
D_\class\left(1-\frac{\epsilon}{10}\right)<\frac{\gamma_\class[A]}{A^3}<D_\class\left(1+\frac{\epsilon}{10}\right)
$$
such that $A^{3/2}$ is an integer. Let $M_1=M_1(\epsilon) \geq A$ such that, if $n>M_1$ and $e=nA$, then
$$
D_\class A^3n\left(1-\frac{\epsilon}{5}\right)<\kappa_\class(n,e)<D_\class A^3n\left(1+\frac{\epsilon}{5}\right).
$$
Let $G_1$ be a graph in $\class$ with $n_1>M_1$ vertices $e_1=An_1$ edges, and suppose that $G_1$ is drawn in the plane with $\kappa_\class(n_1,e_1)$ crossings, where 
$$
D_\class A^{3}n_1\left(1-\frac{\epsilon}{5}\right)<\kappa_\class(n_1,e_1)<D_\class A^3n_1\left(1+\frac{\epsilon}{5}\right).
$$
For each vertex $v\in G_1$ 
do the following. Let $d(v)=r_vA^{3/2}+s_v$ where $r_v,s_v$ are integers and $0\leq s_v< A^{3/2}$. We split $v$ into $r_v+1$ vertices, one with degree $s_v$ and $r_v$ with degree $A^{3/2}$. (Note that this implies that we do nothing for vertices with $r_v=0$, i.e. when the degree of the vertex is smaller than $A^{3/2}$.)
Drawing these vertices very close to each other, we may do this without creating any additional crossings. This creates a drawing of a new graph $G_2$ that has $n_2$ vertices, $e_1$ edges and maximum degree at most $A^{3/2}$, and the crossing number of this drawing is $\kappa_\class(n_1,e_1)$. Since $\class$ is closed under vertex splitting, $G_2\in \class$. We have that 
$$
2A n_1=2e_1=\sum_{v\in G_1}d(v)=\sum_{v\in V}(r_vA^{3/2}+s_v),
$$
which implies that $\sum_{v\in V}r_v\leq \frac{2n_1}{\sqrt{A}}.$
Hence, 
$$
n_1\leq n_2\leq n_1+\frac{2n_1}{\sqrt{A}}\leq n_1\left(1+\frac{\epsilon}{10}\right). 
$$
Now, fix integers $n$ and $e$ such that $\min\{n,e/n,n^2/e\}>M(\epsilon)=10\frac{M_1}{\epsilon}$. Let 
$$
L=\frac{e/n}{e_1/n_2}
\quad
\mbox{and} 
\quad
K=\frac{n^2/e}{n_2^2/e_1},
$$
so that $n=KLn_2$ and $e=KL^2e_1$. Let
$$
\tilde{L}=\floor{L\left(1+\frac{\epsilon}{10}\right)}\quad \mbox{and}\quad \tilde{K}=\floor{K\left(1-\frac{\epsilon}{10}\right)},
$$
and let 
$$
\tilde{n}=\tilde{K}\tilde{L}n_2\quad \mbox{and}\quad \tilde{e}=\tilde{K}\tilde{L}^2e_1.
$$
Then,
$$
\tilde{n}<n \mbox{ and } e<\tilde{e}. 
$$
which implies that $\kappa_\class(n,e)<\kappa_\class(\tilde{n},\tilde{e})$. Substitute each vertex of $G_2$ with $\tilde{L}$ very close vertices, and substitute each edge of $G_2$ with the corresponding $\tilde{L}^2$ edges, each very close to the original, obtaining a drawing of a new graph $G_3$ with
$n_2\tilde{L}$ verices and $e_1 \tilde{L}^2$ edges. As $G_3$ is obtained from $G_2$ by cloning each vertex, $G_3\in\class$. 
Then make $\tilde{K}$ copies of this drawing, each separated from the others. We then have a graph $\tilde{G}$
on $\tilde{n}$ vertices and $\tilde{e}$ edges drawn in the plane. As $\tilde{G}$ is a disjoint union of $\tilde{K}$ copies of $G_3$, $\tilde{G}\in\class$. We will estimate the number of crossings $X$ in this drawing. 

A crossing in the original drawing of $G_2$ corresponds to $\tilde{K}\tilde{L}^4$ crossings in the present drawing of $\tilde{G}$. For any two edges of $G_2$ with a common endpoint, the edges arising from them have at most $\tilde{K}\tilde{L}^4$ crossings with each other. So,
$$
X\leq \tilde{K}\tilde{L}^4\left(\kappa_\class(n_1,e_1)+\sum_{v\in V(G_2)}\binom{d(v)}{2}\right).
$$
However, 
$$
\sum_{v\in V(G_2)}d(v)=2e_1=2An_1  
$$
and $d(v) \leq A^{3/2}$,
so 
$$
\sum_{v\in V(G_2)}\binom{d(v)}{2}\leq  A^{5/2}n_1<\frac{\kappa_\class(n_1,e_1)}{A^{1/2}D_\class\left(1-\frac{\epsilon}{5}\right)}\le\frac{\epsilon}{10}\kappa_\class(n_1,e_1).
$$
Therefore, 
\begin{align*}
\kappa_\class(n,e)&<\kappa_\class(\tilde{n},\tilde{e})<\tilde{K}\tilde{L}^4\kappa_\class(n_1,e_1)\left(1+\frac{\epsilon}{10}\right)
<\tilde{K}\tilde{L}^4D_\class A^3n_1\left(1+\frac{\epsilon}{5}\right)\left(1+\frac{\epsilon}{10}\right)\\
&=\tilde{K}\tilde{L}^4D_\class\frac{e_1^3}{n_1^2}\left(1+\frac{\epsilon}{5}\right)\left(1+\frac{\epsilon}{10}\right)
<KL^4D_\class\frac{e_1^3}{n_2^2}\left(1+\frac{\epsilon}{10}\right)^6\left(1+\frac{\epsilon}{5}\right)\left(1+\frac{\epsilon}{10}\right)\\
&=\frac{e^3}{n^2}\left(1+\frac{\epsilon}{10}\right)^6\left(1+\frac{\epsilon}{5}\right)\left(1+\frac{\epsilon}{10}\right)D_\class
<(1+\epsilon)\frac{e^3}{n^2}D_\class.
\end{align*}
\end{proof}


\begin{thebibliography}{10}
\bibitem{Ackerman}
E. Ackerman, On topological graphs with at most four crossings per edge, arXiv:1509.01932.



\bibitem{Ajtai}
M. Ajtai, V. Chv\'atal, M. Newborn, and E. Szemer\'edi, Crossing-free subgraphs, in: \textit{Theory and
Practice of Combinatorics}, Mathematical Studies, Vol. 60, North-Holland, Amsterdam, 1982, pp, 9--12.

\bibitem{ABKPU} 
P. Angelini, M.A. Bekos, M, Kaufmann, M. Pfister, T. Ueckerdt, Beyond-planarity: density results for bipartite graphs,  arXiv:1712.09855.

\bibitem{Asplund}
J. Asplund, \'E. Czabarka, G. Clark, G. Cochran, A. Hamm, G. Spencer, L.A. Sz\'ekely, L. Taylor, and Z. Wang, Using block designs in crossing number bounds, to appear in \textit{J. Combin. Design.} arXiv: 1807.03430.


\bibitem{CSSV}
\'E. Czabarka, O. S\'ykora, L.A. Sz\'ekely, and I. Vr\u{t}o, Biplanar crossing numbers II: comparing crossing
numbers and biplanar crossing numbers using the probabilistic method, \textit{Random Structures and
Algorithms} \textbf{33} (2008), 480--496.


\bibitem{EG}
 P. Erd\H{o}s and R.K. Guy, Crossing number problems, \textit{Amer. Math. Monthly} \textbf{80} (1973), 52--58.

\bibitem{GJ}
M.R. Garey and D.S. Johnson, Crossing number is NP-complete, \textit{SIAM J. Algebraic Discrete
Methods} \textbf{4} (1983), 312--316.


\bibitem{Leighton-1}
F.T. Leighton, New lower bound techniques for VLSI, \textit{Math. Systems Theory} \textbf{17} (1984), 47--70.

\bibitem{PRTT}
J. Pach, R. Radoi\u{c}i\'c, G. Tardos, and G. T\'oth, Improving the Crossing Lemma by finding
more crossings in sparse graphs, \textit{Discrete Comput. Geom.} \textbf{36} (2006), 527--552.

\bibitem{PST}
J. Pach, J. Spencer, and G. T\'oth, New bounds on crossing numbers, \textit{Discrete and Computational Geometry} \textbf{24} (2000), 623--644.

\bibitem{PSTT}
J. Pach, L.A. Sz\'ekely, Cs.D. T\'oth, G. T\'oth, Note on $k$-planar crossing numbers, \textit{Computational
Geometry: Theory and Applications} Special Issue in Memoriam Ferran Hurtado. \textbf{68} (2018), 2--6.


\bibitem{PT-cr}
J. Pach and G. T\'oth, Graphs drawn with few crossings per edge, \textit{Combinatorica} \textbf{17} (1997), 427--439.

\bibitem{polya} G. P\'olya and G. Szeg\H o, \textit{Problems and Theorems in Analysis I}, Springer-Verlag Berlin Heidelberg 1972.  

\bibitem{Schaefer} M. Schaefer, \textit{ Crossing Numbers of Graphs}
CRC Press,  2017, Boca Raton, Florida



\bibitem{Szekely}
L.A. Sz\'ekely, Crossing numbers and hard Erd\H{o}s problems in discrete geometry, \textit{Combin. Probab.
Comput.} \textbf{6} (1997), 353--358.

\bibitem{ST}
E. Szemer\'edi and W.T. Trotter, Extremal problems in discrete geometry, \textit{Combinatorica} \textbf{3}
(1983), 381--392.


\end{thebibliography}
\end{document}